\documentclass[10pt, oneside, a4paper]{scrartcl}

\usepackage{amsmath,amssymb,amsthm}


\usepackage[T1]{fontenc}
\usepackage[utf8]{inputenc}
\usepackage[english]{babel}

\usepackage{fullpage}
\usepackage{microtype}
\usepackage{booktabs}
\usepackage[version=4]{mhchem}

\usepackage{csquotes} 
\usepackage[shortlabels]{enumitem}  
\usepackage{hyperref} 
\usepackage{xcolor}   
\usepackage{tikz}
\usepackage[normalem]{ulem}       

\usepackage{ifthen}

\usepackage{graphicx,float,subfigure}
\usepackage{capt-of}  
\graphicspath{{images/}}

\usepackage{acronym}

\usepackage[backend=biber,giveninits=true,style=numeric-comp,sorting=nyt,hyperref=true,url=false]{biblatex}
\addbibresource{literature.bib}

\newcommand*{\unit}[1]{\ensuremath{\mathrm{\,#1}}}

\usepackage{amssymb,amsmath,amsthm}
\usepackage{marvosym}
\usepackage{bbm}
\usepackage{dsfont}
\usepackage{array}
\usepackage{extpfeil}

\allowdisplaybreaks{}


\DeclarePairedDelimiter\norm{\|}{\|}

\DeclarePairedDelimiter\paren{(}{)}

\DeclarePairedDelimiter\inner{\langle}{\rangle}

\newcommand{\vfs}{\mathfrak{X}}     
\newcommand{\e}{\mathrm{e}}
\newcommand{\R}{\mathbb{R}}
\newcommand{\CC}{\mathbb{C}}


\newcommand{\diff}{\mathrm{d}}  
\newcommand{\im}{\mathrm{i}}  

\newcommand{\T}{\mathrm{T}}     
\newcommand{\NS}{\mathrm{N}}     
\newcommand{\UT}{\mathrm{UT}}     
\newcommand{\UN}{\mathrm{UN}}     
\newcommand{\sphere}{\mathbb{S}}  
\DeclareMathOperator{\spn}{span}

\DeclareMathOperator{\Lip}{Lip}

\newcommand{\deriv}[3][]{\frac{\diff^{#1}#2}{\diff #3^{#1}}}

\newcommand{\ambient}{M}
\newcommand{\ambdim}{m}
\newcommand{\nhim}{N}
\newcommand{\nhimdim}{n}
\newcommand{\flow}{\Phi}
\newcommand{\Def}[1]{{%
   \newcommand{\domain}{\mathcal{D}}%
   \ifthenelse{\equal{#1}{}}%
   {\domain}{\domain^{\left(#1\right)}}}}

\newcommand{\less}{\mbox{}\vspace{-\baselineskip}}

\newtheoremstyle{theorem}{}{}{\itshape}{}{\bfseries}{}{\newline}{}

\theoremstyle{theorem}
\newtheorem{theorem}{Theorem}
\newtheorem{lemma}[theorem]{Lemma}
\newtheorem{proposition}[theorem]{Proposition}
\newtheorem{corollary}[theorem]{Corollary}

\newtheoremstyle{example}{}{}{}{}{\bfseries}{}{\newline}{}

\theoremstyle{definition}
\newtheorem{definition}[theorem]{Definition}

\newtheorem{examples}[theorem]{Examples}

\title{Approximating normally attracting invariant manifolds using
trajectory-based optimization}
\author{Jörn Dietrich, Dirk Lebiedz}

\setlength\parindent{0pt}

\begin{document}

\maketitle

\begin{abstract}
  \textbf{Abstract:}
  The numerical simulation of chemically reactive flows is a major challenge due
  to stiffness and often high dimension of the corresponding kinetic differential
  equations.
  Manifold-based model reduction techniques address this problem by projecting
  the full phase space onto manifolds of `slow motion', which capture the
  system's long-term dynamics.
  In this article we study the trajectory-based optimization approach based on
  the original approach by Lebiedz (2004), which determines these manifolds in
  a variational formulation as minimizers of an appropriate functional.
  This article provides a rigorous explanation for its effectiveness, showing
  that it definitely approximates nonuniformly normally attracting orbits.
  It also outlines how the method can be utilized to approximate nonuniformly
  normally attracting invariant manifolds of higher dimension.
  Throughout the article we use a coordinate-free formulation on a Riemannian
  manifold. This is especially useful for systems subject to nonlinear
  constraints, e.g., adiabatic conditions in isolated chemical systems.
\end{abstract}

\section{Introduction}
The qualitative fundamental topological structure of a smooth dynamical system
is encoded in its invariant sets and manifolds.
When it comes to the long-term behavior of dissipative systems those
invariant manifolds that are relatively slow compared to nearby orbits are of
particular interest.
These manifolds usually offer an effective model reduction approach reducing
both dimension and numerical stiffness of the system.
Beyond the concept of attractors, center manifolds and inertial
manifolds~\cite{Temam1990} are probably the most prominent examples of such
manifolds.
Another popular concept are normally attracting invariant manifolds (NAIMs),
which are normally hyperbolic invariant manifolds (NHIMs) with empty unstable
bundle.
NHIMs were introduced to generalize the structural
stability~\cite{Fenichel1971,Sakamoto1990,Bates1999,Eldering2012} and the Stable
Manifold Theorem~\cite{Hirsch1970} of hyperbolic fixed points and hyperbolic
periodic orbits to higher dimensions.
These results lie at the core of modern geometric singular perturbation theory
(GSPT) and lead to continuative concepts such as so-called slow invariant
manifolds (SIMs) in the context of slow-fast systems~\cite{Fenichel1979}.
The idea is to construct invariant manifolds using small perturbations and the
structural stability of NHIMs.

There are, however, some critical issues related to these constructions,
especially when considering noncompact critical manifolds.
(The compact case is comprehensibly presented by Hirsch et al.~\cite{Hirsch1977}.)
The two feasible options are to either apply the theorem by
Fenichel~\cite{Fenichel1979} for compact, normally hyperbolic subsets of the
critical manifold with relatively loose assumptions or the persistence theorem
by Eldering for noncompact NAIMs~\cite{Eldering2012}.
The problem with the first approach is the non-uniqueness of the invariant
manifold and the possible loss of normal hyperbolicity, which was extensively
studied by, e.g., Kaneko~\cite{Kaneko1986}, Bonatti et al.~\cite{Bonatti2006},
and Haro and de la Llave~\cite{Haro2006}.
Both issues are addressed by Eldering in the case of NAIMs.
While the uniqueness of the perturbed manifold is a result of the general
setting in bounded geometry, Eldering resolves the breakdown of normal
hyperbolicity -- at least in the case of trivial bundles~\cite{Eldering2013}.
Another result on the persistence of Zhou and Zhang~
Although this second approach is certainly useful for a wide class of
applications the boundedness and uniform continuity of feasible perturbations
is rather restrictive, e.g., in the realm of chemical kinetics.
Furthermore, it is difficult to predict the feasible extent of perturbations
for both approaches.
Numerical algorithms building upon these results -- such as the one by Broer,
Osinga, and Vegter~\cite{Broer1997} -- inherit the above limitations.\vspace{1em}

The mathematical difficulty of capturing an appropriate and convenient concept
for attracting invariant manifolds of slow motion has been predated and
accompanied by more pragmatic computational methods in the areas of science and
engineering.
This is not surprising as the wide range of scientific applications has been a
strong motivation to study abstract dynamical systems in the first place.
In chemical kinetics this development started with the
quasi-steady-state assumption (QSSA)~\cite{Bodenstein1913,Chapman1913} and the
partial equilibrium assumption (PEA)~\cite{Michaelis1913}.
Both methods excel in terms of conceptual simplicity and, thus, are still in
use despite the availability of more sophisticated approaches.

Inspired by the new insights in invariant manifold theory mentioned above and
fuelled by the growing demand for efficient simulations of detailed
high-dimensional models, new dimension reduction methods were proposed,
beginning in the 1980s.
One of these is the computational singular perturbation (CSP) technique by
Lam and Goussis~\cite{Lam1985,Lam1988} with clear nominal reference to GSPT\@.

Later on, among others the intrinsic low-dimensional manifold (ILDM)
method by Maas and Pope~\cite{Maas1992}, the zero-derivative principle (ZDP) by
Gear et al.~\cite{Gear2005}, the flow curvature method
(FCM)~\cite{Ginoux2006}, and the stretching-based diagnostics
(SBD)~\cite{Adrover2007} were introduced.
While these methods indeed provide manifolds, along which the dynamics are slow
and/or attracting in some sense, they are not invariant in general.
Beginning in 2004 this deficit is addressed by trajectory-based methods such as
the trajectory-based optimization approach (TBOA) by
Lebiedz~\cite{Lebiedz2004}, the invariant constrained equilibrium edge
preimage curve (ICE-PIC) method by Ren and Pope~\cite{Ren2005} and some methods
using finite-time Lyapunov exponents (FTLEs), e.g., by Mease et
al.~\cite{Mease2016}.

The current article combines the TBOA with ideas of FTLE in order to construct
a sequence of invariant manifolds that cluster around NAIMs under certain
assumptions.
The main advantage of this approach is the independence of the otherwise
extensively used persistence property of NHIMs.
This makes it a viable option for applications without an obvious slow-fast
formulation or whenever persistence fails.

In contrast to previous articles by Lebiedz et
al.~\cite{Lebiedz2006a,Lebiedz2010,Lebiedz2011,Lebiedz2011a,Lebiedz2013,Lebiedz2014,Lebiedz2016,Heiter2018},
we formulate the TBOA more generally on Riemannian manifolds.
This generalization is useful if nonlinear conservation laws are considered,
e.g., adiabatic conditions in chemical kinetics.

Section~\ref{sec:basics} defines normal hyperbolicity and normal attraction.
Afterwards, we provide sufficient conditions for generalized normally
hyperbolic orbits.
These conditions are formulated in terms of Lyapunov exponents and they do not
require knowledge about the vector bundle in the definition of NAIMs.

Section~\ref{sec:numerics} shows how the TBOA can be used to approximate NAIMs
and applies the method to two two-dimensional reference models and one
realistic combustion model.

In Section~\ref{sec:higher_dim}, the results of the previous two sections are
extended to higher dimensions.
We extend the theoretic results of Section~\ref{sec:basics} and formulate a
numeric method similar to the one in Section~\ref{sec:numerics} to approximate
NAIMs of higher dimensions.

\section{Technical setting and basic concepts}\label{sec:basics}
Let $\ambient$ be a complete, $q$-dimensional Riemannian manifold with metric tensor
$g$.
Denote the space of all smooth vector fields on $\ambient$ by $\mathfrak{X}(\ambient)$.
In this article we study smooth injective semiflows on $\ambient$, i.e., smooth
monoid actions $\flow:\R_{\ge0}\times \ambient \to \ambient, \flow^t(p):=\flow(t,p)$ with
$\flow^t$ being injective for all $t$.

For each $p\in \ambient$ we further define the following connected open set
\begin{equation*}
  \Def{p} :=
  \{s-t \mid \flow^t(\tilde p) = \flow^s(p), \tilde p\in \ambient, t,s\ge0\}.
\end{equation*}
The openness due to the local Lipschitz continuity of the infinitesimal
generator of the smooth action $\flow$.\\
The injectivity enables us to uniquely define $\flow^{-t}(p):=\tilde p$ whenever
$t\in\Def{p}$ and $\flow^t(\tilde p)=p$.

\subsection*{Normally attracting invariant manifolds}
Our goal is to detect and numerically approximate normally attracting invariant
manifolds (NAIMs) in $\ambient$.
These are normally hyperbolic invariant manifolds (NHIMs) with empty unstable
bundle.
Usually, NHIMs are defined analogously to \emph{uniformly} hyperbolic sets.
In this article, however, we work with the following nonuniform definition,
which is related to \emph{nonuniformly} hyperbolic sets.
This less restrictive version simplifies our later constructions.

\begin{definition}[Normally attracting invariant manifold]\label{def:naim}
  Let ${\{\flow^t\}}_{t\ge0}$ be a smooth semiflow on a Riemannian manifold $\ambient$.
  An invariant (immersed) $m$-submanifold $\nhim$ of $\ambient$ is called
  \textbf{(nonuniformly) relatively normally attracting} if there is an invariant smooth
  vector bundle $E$ over $\nhim$ with
  \begin{equation*}
    \T\ambient|_M=\T\nhim \oplus E,
  \end{equation*}
  corresponding projections $\pi_M:\T\ambient\to\T\nhim$, $\pi_E:\T\ambient\to E$,
  and non-negative numbers $0\le\nu_C<\nu$ such that for each $p\in\nhim$
  there is some $C(p)\ge 0$ with
  \begin{equation}\label{eq:norm_attr}
    \left\|\diff\flow^t(w_p)\right\|\|v_p\|
    \le C(p) \e^{-t\nu} \left\|\diff\flow^t(v_p)\right\|\|w_p\|
  \end{equation}
  for all $w_p\in E_p, v_p\in\T_p\nhim$, and $t\ge 0$,
  as well as
  \begin{equation}\label{eq:lipc}
    C(p) \e^{-t\nu_C} \le C(\flow^t(p)) \le C(p) \e^{t\nu_C}
  \end{equation}
  for $t\ge 0$.
\end{definition}

\begin{examples}
  One important example is given by \emph{uniformly absolutely normally
  attracting} invariant manifolds with smooth decomposition
  \begin{equation*}
    \T\ambient|_M=\T\nhim \oplus E
  \end{equation*}
  and non-negative numbers $0\le\nu_M<\nu_E$ such that for each $p\in\nhim$
  there are $C_M,C_E>0$ with
  \begin{align*}
    \left\|\diff \flow^t (v_p)\right\| &\ge C_M\,\e^{-t\nu_M}\,\|v_p\| &
    \text{for all}~v_p\in \T_p\nhim, t\ge 0,\\
    \left\|\diff \flow^t (w_p)\right\| &\le C_E\,\e^{-t\nu_E}\,\|w_p\| &
    \text{for all}~w_p\in E_p, t\ge 0,
  \end{align*}
  Using this definition absolute normal attraction obviously implies relative
  normal attraction, by setting
  \begin{equation*}
    C=\frac{C_E}{C_M} > 0 \quad\text{and}\quad
    \nu := \nu_E - \nu_M > 0.
  \end{equation*}
\end{examples}

Our definition is inspired by the concept of eventual normal hyperbolicity as
stated by Hirsch, Pugh, and Shub in~\cite{Hirsch1977}.
It is also inspired by the theory of nonuniform partial
hyperbolicity~\cite{Barreira2007}, which is formulated for general invariant
sets, considers arbitrary smooth vector bundles, and allows $\nu$ and $\nu_C$
to be nonuniform, (flow-)invariant functions on $\nhim$.\vspace{1em}

There has been some research on generalizing results of
Fenichel~\cite{Fenichel1971,Fenichel1974,Fenichel1977,Fenichel1979} and Hirsch
et al.~\cite{Hirsch1970,Hirsch1977} to noncompact invariant manifolds.
The persistence property was extended by Sakamoto~\cite{Sakamoto1990}, Bates,
Lu, and Zeng~\cite{Bates1999} to noncompact NHIMs manifolds in Euclidean and
Banach spaces, respectively.
Later, Eldering~\cite{Eldering2012} provided a similar result for NAIMs in
bounded geometry.
All of the above results rely on uniformity.
Recently, Zhou and Zhang~\cite{Zhou2021} generalized the persistence result to
nonuniform NHIMs and exponentially bounded perturbations in Banach spaces.

Our method approximates nonuniform NAIMs and, thus, we can assume persistence
whenever the additional assumptions in~\cite{Zhou2021} are satisfied.
From a modeling perspective this is important because inaccurate model
parameters are taken into account.

Before continuing we want to capture the following proposition.
\begin{proposition}\less
  \begin{enumerate}[(i)]
    \item For $p\in\nhim$, consider the function
      \begin{equation*}
        C_p: \Def{p} \to \R^+,\quad t \mapsto C(\flow^t(p)).
      \end{equation*}
      Then, property (\ref{eq:lipc}) of Definition~\ref{def:naim} is equivalent to
      \begin{equation*}
        \sup_{p\in \nhim} \Lip(\log \circ\,C_p) \le \nu_C,
      \end{equation*}
      where $\Lip$ denotes the Lipschitz constant of a function.
      For the rest of the article we call this property the \emph{log-Lipschitz
      continuity} of $C_p$.
    \item For given $\nu,\nu_C$ the pointwise smallest function satisfying both
      (\ref{eq:norm_attr}) and (\ref{eq:lipc}) of Definition~\ref{def:naim} is
      given by
      \begin{equation}
        \hat C(p) := \sup_{t\in\Def{p}} c(\flow^t(p))\,\e^{-|t|\nu_C}
      \end{equation}
      for all $p\in\nhim$, where
      \begin{equation*}
        c(p) :=
        \sup_{s\ge0}\;\sup_{\substack{0\neq v_p\in\T_{p}\nhim\\0\neq w_p\in E_p}}
        \frac{\|\diff\flow^{s}(w_p)\| \|v_p\|}
        {\|\diff\flow^{s}(v_p)\| \|w_p\|} \e^{s\nu}.
      \end{equation*}
    \item For all $p\in\nhim$ and $t\in\Def{p}$ we have
      $1 \le c(p) \le \hat C(p)$.
  \end{enumerate}
\end{proposition}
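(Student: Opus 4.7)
The three parts are all manipulations of the exponential bounds in Definition~\ref{def:naim}, so the strategy is to take logarithms wherever possible and exploit the monoid/semiflow structure together with injectivity. Throughout I will use that $\flow^{-t}(p)$ is defined whenever $-t\in\Def{p}$ and that $\Def{\flow^t(p)}=\Def{p}-t$, which follows directly from the definition of $\Def{p}$ and injectivity.

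For part (i), I would first take logarithms of (\ref{eq:lipc}) to rewrite it as $|\log C(\flow^t(p))-\log C(p)|\le t\nu_C$ for $t\ge 0$. Given two arbitrary times $t_1<t_2$ in $\Def{p}$, set $q:=\flow^{t_1}(p)$ and apply the previous inequality at $q$ with time $t_2-t_1\ge 0$; this yields $|\log C_p(t_2)-\log C_p(t_1)|\le (t_2-t_1)\nu_C$, giving the Lipschitz bound. The converse is obtained by taking $t_1=0$ and $t_2=t$.

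For part (ii), I would first observe that $c(p)$ is, by construction, the pointwise smallest value of $C(p)$ at which (\ref{eq:norm_attr}) can hold at~$p$. Any admissible $C$ must therefore satisfy $C(p)\ge c(p)$, and combining this with the log-Lipschitz bound established in (i) (together with its backward-time version derived via injectivity) yields $C(p)\ge c(\flow^t(p))\,\e^{-|t|\nu_C}$ for every $t\in\Def{p}$, hence $C(p)\ge \hat C(p)$. The converse — that $\hat C$ itself satisfies both conditions — is the place where one has to be careful. Condition (\ref{eq:norm_attr}) follows by plugging in $t=0$ in the supremum. For condition (\ref{eq:lipc}), I would substitute $u=s+t$ in the supremum defining $\hat C(\flow^t(p))$ (using $\Def{\flow^t(p)}=\Def{p}-t$), giving
\begin{equation*}
  \hat C(\flow^t(p)) = \sup_{u\in\Def{p}} c(\flow^u(p))\,\e^{-|u-t|\nu_C},
\end{equation*}
and conclude via the triangle estimate $|u-t|\ge|u|-|t|$ that $\hat C(\flow^t(p))\le \hat C(p)\,\e^{t\nu_C}$; the reverse inequality is obtained symmetrically by rewriting $\hat C(p)$ via the substitution $u=v+t$ and applying $|v+t|\ge|v|-|t|$.

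Part (iii) is immediate: taking $t=0$ in the supremum defining $\hat C$ gives $\hat C(p)\ge c(p)\,\e^0=c(p)$, while taking $s=0$ in the supremum defining $c(p)$ uses $\diff\flow^0=\mathrm{id}$ to produce the value $1$, yielding $c(p)\ge 1$. The only real obstacle I anticipate is bookkeeping around the domain $\Def{p}$ and consistently using injectivity to pass between forward and backward times; once those identifications are in place the rest is algebraic manipulation of exponentials.
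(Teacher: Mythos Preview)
Your proposal is correct and follows essentially the same route as the paper: both hinge on the substitution identity $\hat C(\flow^{t}(p))=\sup_{s\in\Def{p}} c(\flow^{s}(p))\,\e^{-|s-t|\nu_C}$ together with a triangle-inequality estimate on $|s-t|$. The only cosmetic difference is that the paper verifies admissibility of $\hat C$ first and then argues minimality by contradiction, whereas you derive the lower bound $C\ge\hat C$ directly from $C\ge c$ plus log-Lipschitz; your ordering is arguably cleaner but the content is the same.
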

\begin{proof}
  (i) and (iii) are clear.
  (ii) First, we show that $\hat C$ satisfies the inequalities
  (\ref{eq:norm_attr}) and (\ref{eq:lipc}) of Definition~\ref{def:naim}.

  Without loss of generality, choose $0\neq v_p\in\T_{p}\nhim$,
  $0\neq w_p\in E_p$.
  Since $0\in\Def{p}$, we have
  \begin{equation*}
    \hat C(p) \ge \frac{\|\diff\flow^t(w_p)\| \|v_p\|}
    {\|\diff\flow^t(v_p)\| \|w_p\|} \e^{t\nu} \ge 0
  \end{equation*}
  and, thus,
  \begin{equation*}
    \|\diff\flow^t(w_p)\|\|v_p\|
    \le \hat C(p)\,\e^{t\nu} \|\diff\flow^t(v_p)\| \|w_p\|
  \end{equation*}
  for all $t\ge 0$.

  Next, we want to prove the log-Lipschitz continuity along solution curves.
  Observe that
  \begin{equation*}
    \hat C(\flow^{t}(p))
    = \sup_{s+t\in\Def{p}} c(\flow^{s+t}(p))\,\e^{-|s|\nu_C}
    = \sup_{s\in\Def{p}} c(\flow^s(p))\,\e^{-|s-t|\nu_C}.
  \end{equation*}
  For each $t_1,t_2\in\Def{p}$ we have
  \begin{align*}
    \log \hat C(\flow^{t_1}(p)) -\log \hat C(\flow^{t_2}(p))
    &= \sup_{s_1\in\Def{p}}\inf_{s_2\in\Def{p}}
    (\log c(\flow^{s_1}(p)) +\nu_C|s_1-t_1| -\log c(\flow^{s_2}(p))
    -\nu_C|s_2-t_2|)\\
    &\le \sup_{s_1\in\Def{p}} \nu_C(|s_1-t_1|-|s_1-t_2|)
    \le \nu_C |t_1-t_2|.
  \end{align*}

  Finally, assume there was some function $D$ with $D(t)\ge c(\flow^t(p))$ for
  all $t\in\Def{p}$ and $\Lip(\log\circ D)\le \nu_C$ but
  $D(t^*)<\hat C(\flow^{t^*}(p))$ for some $t^*\in\Def{p}$.
  Then,
  \begin{equation*}
    D(t^*) < \sup_{t\in\Def{p}} c(\flow^t(p))\,\e^{-|t-t^*|\nu_C}
    \le \sup_{t\in\Def{p}} D(t)\,\e^{-|t-t^*|\nu_C},
  \end{equation*}
  which implies
  \begin{equation*}
    \sup_{t\in\Def{p}} (\log D(t) -\log D(t^*) -|t-t^*| \nu_C) > 0
  \end{equation*}
  and, thus, contradicts
  \begin{equation*}
    |\log D(t)-\log D(t^*)| \le \nu_C |t-t^*|.
  \end{equation*}
\end{proof}

\subsection*{Lyapunov exponents}
Normal hyperbolicity is closely linked to Lyapunov exponents, which is why we
introduce a granular notation.
First, we define the \emph{finite-time Lyapunov exponent}
$\lambda:\Def{p}\times\T_p \ambient\to\overline{\R}$ for some $p\in \ambient$ by:
\begin{equation*}
  \lambda^t(v_p) := \lambda(t,v_p) :=
  \begin{cases}
    \frac{1}{|t|} \log \frac{\|\diff\flow^t(v_p)\|}{\|v_p\|}
    & t\neq 0,v_p\neq 0,\\[2mm]
    -\infty & v_p=0\\
    \limsup\limits_{\substack{T\to 0\\T\neq 0}} \lambda^T(v_p) & \text{otherwise.}
  \end{cases}
\end{equation*}
Using this definition, the \emph{forward and backward Lyapunov exponents}
$\lambda^+,\lambda^-:\T\ambient\to\overline{\R}$ are given by
\begin{equation*}
  \lambda^+(v_p) := \limsup_{T\to\sup\Def{p}} \lambda^T(v_p),\quad
  \lambda^-(v_p) := \limsup_{T\to\inf\Def{p}} \lambda^T(v_p).
\end{equation*}

In the theory of Lyapunov regularity the \emph{Lyapunov exponent of the adjoint
variational equation} plays an important role.
We will call it \emph{adjoint Lyapunov exponent} for short.
For finite time horizons and some $p\in \ambient$ the \emph{finite-time adjoint
Lyapunov exponent} $\tilde\lambda:\Def{p}\times\T^*_p \ambient\to\overline{\R}$ is
given by
\begin{equation*}
  \tilde\lambda^t(\omega_p) := \tilde\lambda(t,\omega_p) :=
  \begin{cases}
    \frac{1}{|t|} \log \frac{\|\diff(\flow^{-t})^*\omega_p\|}{\|\omega_p\|}
    & t\neq 0,\omega_p\neq 0,\\[2mm]
    -\infty & \omega_p=0\\
    \limsup\limits_{\substack{T\to0\\T\neq 0}} \tilde\lambda^T(\omega_p) & \text{otherwise,}
  \end{cases}
\end{equation*}
where $\diff(\flow^{-t})^*_{\flow^t(p)}:\T^*_{\flow^t(p)}\ambient \to \T^*_p \ambient$ denotes
the pullback by $\flow^{-t}$ at $\flow^t(p)$.
Again, we define the corresponding \emph{forward} and \emph{backward adjoint
Lyapunov exponents} by
\begin{align*}
  \tilde\lambda^+(\omega_p) = \limsup_{T\to\sup\Def{p}} \tilde\lambda^T(\omega_p),\quad
  \tilde\lambda^-(\omega_p) = \limsup_{T\to\inf\Def{p}} \tilde\lambda^T(\omega_p).
\end{align*}

The theory of Lyapunov exponents provides us with a mature toolkit to study
smooth dynamical systems.
Before we turn towards normal attraction let us recall some of their essential
properties.
According to Bylov et al.~\cite{Bylov1966} (see also~\cite{Barreira2002}) the
three characteristic properties of Lyapunov exponents are:
\begin{enumerate}[(L1)]
  \item $\lambda(\alpha v_p) = \lambda(v_p)$ for all $v_p\in\T_p \ambient$ and
    $\alpha\in\R\setminus\{0\}$;
  \item\label{prop:lyapunov_exp}
    $\lambda(v_p+w_p)\le\max\{\lambda(v_p),\lambda(w_p)\}$ for all
    $v_p,w_p\in \T_p \ambient$;
  \item $\lambda(0_p) = -\infty$.
\end{enumerate}
In our setting of smooth vector flows we have the following additional
properties.
\begin{enumerate}[(L1)]
  \setcounter{enumi}{3}
  \item The forward and backward Lyapunov exponent of global flows are
    invariant.~\cite{Barreira2002}
    In particular, we have
    \begin{equation*}
      \lambda^+|_p = \lambda^+|_{\flow^t(p)}\circ\diff\flow^t|_p
      \quad\text{for all}~t\in\Def{p},~\text{if}~\sup\Def{p}=\infty,
    \end{equation*}
    and
    \begin{equation*}
      \lambda^-|_p = \lambda^-|_{\flow^t(p)}\circ\diff\flow^t|_p
      \quad\text{for all}~t\in\Def{p},~\text{if}~\inf\Def{p}=-\infty.
    \end{equation*}
  \item The forward (backward) Lyapunov exponent and forward (backward)
    disjoint Lyapunov exponents are \emph{dual}~\cite{Barreira2017}, i.e., for
    all dual bases $v_1,\dots,v_q\in\T_p\ambient$,
    $\omega^1,\dots,\omega^q\in\T^*_p\ambient$ and $t\in\bar\R$ we have
    \begin{equation*}
      \lambda^\pm(v_i)+\tilde\lambda^\pm(\omega^i)\ge 0.
    \end{equation*}
\end{enumerate}
The finite-time Lyapunov exponents satisfy
\begin{enumerate}[(L1)]
  \setcounter{enumi}{5}
  \item $\lambda^t(\diff\flow^s(v))
    = \frac{|s+t|}{|t|}\lambda^{s+t}(v) - \frac{|s|}{|t|}\lambda^s(v)$ for all
    $s,t\in\Def{p}$ with $s+t\in\Def{p}$.
\end{enumerate}

Lyapunov exponents are tightly connected to nonuniform (partial) hyperbolicity
as was shown by Pesin~\cite{Pesin1976,Bunimovich2000}.
He established that an invariant set of regular points is nonuniformly
hyperbolic if the Lyapunov exponent on it does not vanish.

The following proposition provides necessary conditions for nonuniform NAIMs in
terms of Lyapunov exponents.
Afterwards, these conditions enable us to generalize the concept of normal
hyperbolicity, which will be important for the rest of the article.

\begin{proposition}
  Let $\flow$ be a smooth flow on a Riemannian manifold $\ambient$ and $\nhim$
  a relatively normally attracting manifold of $\flow$.
  Then, every $p\in\nhim$ satisfies the following inequalities
  \begin{enumerate}[(a)]
    \item $\max \lambda^+[E_p] < \min \lambda^+[\T_p\nhim \setminus \{0_p\}]$ and
    \item $\min \lambda^-[E_p \setminus \{0_p\}] > \max \lambda^-[\T_p\nhim]$.
  \end{enumerate}
  In addition, if $\nhim$ is an absolute NAIM, we have
  \begin{enumerate}[resume*]
    \item $\max \lambda^+[E_p] < 0$ and
    \item $\min \lambda^-[E_p\setminus\{0_p\}] > 0$
  \end{enumerate}
  for all $p\in\nhim$.
\end{proposition}
\begin{proof}
  The inequalities are direct consequences of the definition.
\end{proof}

A continuous invariant vector bundle over an invariant Riemannian manifold is
called \emph{generalized relative NAIM} if it satisfies the first two
inequalities and \emph{generalized absolute NAIM} if it satisfies all four
inequalities.

The following theorem provides necessary conditions for NAIMs.
These inequalities provide upper and lower bounds for Lyapunov exponents and
finite Lyapunov exponents, which is especially useful when the vector bundle
$E$ is not known a priori.

\begin{theorem}\label{thm:naim_le}
  Let $\flow$ be a smooth global flow on a Riemannian manifold $\ambient$ and
  $\nhim\subset\ambient$ an invariant embedded submanifold.
  If $\nhim$ is nonuniformly normally attracting and $p\in\nhim$, then
  \begin{enumerate}[(i)]
    \item for all $0\neq u\in\NS_p\nhim$, every set of vectors $W$ with
      $\spn W=E_p$, and $t\in\Def{p}$ there is always a vector
      $w=w(u)\in W$ with
      \begin{equation*}
        \lambda^t(w) +\tilde\lambda^t(u^\flat)
        \ge \frac{1}{|t|}\log\cos\measuredangle(u,w),
      \end{equation*}
      where $\flat:\T^*\ambient\to\T\ambient$ denotes the musical isomorphism.
      In particular, this implies
      \begin{equation*}
        \lambda^\pm(w) +\tilde\lambda^\pm(u^\flat) \ge 0
      \end{equation*}
      and
      \begin{equation*}
        \max_{0\neq\bar w\in E_p}\lambda^\pm\left(\bar w\right)
        \ge -\min_{\bar u\in\UN_p\nhim} \tilde\lambda^\pm(\bar u^\flat);
      \end{equation*}
    \item for all $0\neq v\in\T_p\nhim$, $0\neq u\in\mathrm{N}_p\nhim$ we have
      \begin{equation*}
        \lambda^+(v) + \tilde\lambda^+(u^\flat) \ge \nu.
      \end{equation*}
      In particular, for each two bases $v_1,\dots,v_m$ of $\T_p\nhim$ and
      $u_{m+1},\dots,u_q$ of $\mathrm{N}_p\nhim$ we have
      \begin{equation*}
        \lambda^+(v_i) + \tilde\lambda^+(u_j^\flat) \ge \nu
      \end{equation*}
      for $i=1,\dots,m$ and $j=m+1,\dots,q$;
    \item for all $0\neq v\in\T_p\nhim$ and $0\neq u\in\NS_p\nhim$ we have
      \begin{equation*}
        \lambda^-(u)-\lambda^-(v) \ge \nu -\nu_C.
      \end{equation*}
      In particular, we have
      \begin{equation*}
        \min_{\bar u\in\UN_p\nhim}\lambda^-(\bar u)
        -\max_{\bar v\in\UT_p\nhim}\lambda^-(\bar v)
        \ge \nu -\nu_C.
      \end{equation*}
  \end{enumerate}
\end{theorem}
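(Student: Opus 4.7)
The three parts are handled in order, each leveraging the preceding one together with the characteristic properties (L1)--(L5) of Lyapunov exponents.

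\textbf{Part (i).} I plan to exploit the pairing identity $(\diff(\flow^{-t})^*u^\flat)(\diff\flow^t(w)) = u^\flat(w) = \langle u,w\rangle$, which follows from the definition of the cotangent pullback, combined with Cauchy--Schwarz. For the right-hand side to be nonzero (so that the log is finite) I need a $w\in W$ with $\langle u,w\rangle\neq 0$. Such a $w$ always exists: no $u\in\NS_p\nhim\setminus\{0\}$ can be orthogonal to all of $E_p$, since that would force $u\perp\T_p\nhim\oplus E_p = \T_p\ambient$, and $W$ spans $E_p$. Dividing the Cauchy--Schwarz bound by $\|u\|\|w\|$ and taking $\frac{1}{|t|}\log$ produces the finite-time inequality; sending $|t|\to\infty$ collapses the right-hand side to zero, yielding the asymptotic duality. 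The max--min statement then drops out by passing to extrema on both sides.

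\textbf{Part (ii).} Rewriting (\ref{eq:norm_attr}) as $\frac{\|\diff\flow^t w\|}{\|w\|}\le C(p)\e^{-t\nu}\frac{\|\diff\flow^t v\|}{\|v\|}$ and log-scaling yields the finite-time gap $\lambda^t(v)-\lambda^t(w)\ge \nu-\frac{1}{t}\log C(p)$ for any nonzero $v\in\T_p\nhim$, $w\in E_p$, and $t>0$. Taking $\limsup_{t\to\infty}$ removes the $C(p)$-term and gives $\lambda^+(v)\ge\lambda^+(w)+\nu$. Combining with the bound $\lambda^+(w(u))+\tilde\lambda^+(u^\flat)\ge 0$ from part (i), applied to the $w(u)\in E_p$ selected there, immediately yields $\lambda^+(v)+\tilde\lambda^+(u^\flat)\ge\nu$.

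\textbf{Part (iii).} This is where I expect the main obstacle, because the statement controls $\lambda^-$ on the non-invariant subspace $\NS_p\nhim$ rather than on $E_p$. My plan is a two-stage argument. First, applying (\ref{eq:norm_attr}) at the base point $q:=\flow^{-t}(p)$ with arguments $\diff\flow^{-t}(v)\in\T_q\nhim$ and $\diff\flow^{-t}(w)\in E_q$ (allowed by invariance of both bundles) yields a backward-time estimate in which the constant is $C(\flow^{-t}(p))$. The log-Lipschitz property (\ref{eq:lipc}) bounds this by $C(p)\e^{t\nu_C}$, so after log-scaling and $\limsup_{t\to\infty}$ I obtain $\lambda^-(w)-\lambda^-(v)\ge\nu-\nu_C$ for any nonzero $w\in E_p$ and $v\in\T_p\nhim$. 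Second, to extend the bound from $E_p$ to $\NS_p\nhim$, I decompose $u=u_M+u_E$ along $\T_p\nhim\oplus E_p$. Since $\NS_p\nhim\cap\T_p\nhim=\{0\}$, the component $u_E$ is nonzero. The case $u_M=0$ is immediate; otherwise the strict gap $\nu>\nu_C$ gives $\lambda^-(u_E)>\lambda^-(u_M)$, so applying subadditivity (L2) to $u_E=u+(-u_M)$ forces $\lambda^-(u)\ge\lambda^-(u_E)$. Chaining with the first-stage inequality yields $\lambda^-(u)-\lambda^-(v)\ge\nu-\nu_C$, and the min--max consequence is then routine.
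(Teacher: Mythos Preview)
Your proposal is correct and follows essentially the same route as the paper: the pullback/pairing identity plus operator-norm bound for (i), the logarithmic rewriting of (\ref{eq:norm_attr}) combined with (i) for (ii), and the backward-time application of (\ref{eq:norm_attr}) together with (\ref{eq:lipc}) for (iii). Your second stage in (iii)---decomposing $u=u_M+u_E$ and using (L2) with the strict gap to pass from $E_p$ to $\NS_p\nhim$---is in fact more explicit than the paper, which compresses this step into the phrase ``because of the gap''.
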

\begin{proof}
  \begin{enumerate}[(i)]
    \item Let $W$ be a set of vectors with $\spn W = E_p$, $u\in\NS_p\nhim$ a
      vector with $u\neq 0$, and $t\in\Def{p}$.
      Then, there must be some $w^*\in W$ such that
      $u^\flat(w^*)=\inner{u,w^*}\neq 0$, because otherwise $u=0$.
      This also implies
      \begin{equation*}
        0\neq u^\flat(w^*) = u^\flat\paren*{\diff\flow^{-t}(\diff\flow^t(w^*)}
        = \paren*{\diff\flow^{-t}}^*_{\flow^t(p)}(u^\flat)\paren*{\diff\flow^t(w^*)}
      \end{equation*}
      and, thus,
      \begin{equation*}
        1 = \paren*{\diff\flow^{-t}}^*_{\flow^t(p)}
        \paren*{\frac{u^\flat}{u^\flat(w^*)}} \paren*{\diff\flow^t(w^*)}
        \le \norm*{\paren*{\diff\flow^{-t}}^*_{\flow^t(p)}
          \paren*{\frac{u^\flat}{u^\flat(w^*)}}} \norm*{\diff\flow^t(w^*)}.
      \end{equation*}
      Finally, we get
      \begin{equation*}
        \tilde\lambda^t(u^\flat)+\lambda^t(w^*)
        \ge \frac{1}{|t|}\log\cos\measuredangle(u,w^*).
      \end{equation*}

    \item Let $0\neq v\in\T_p\nhim$.
      Then, inequality (\ref{eq:norm_attr}) implies
      \begin{equation*}
        \log\frac{\|\diff\flow^t(w)\|}{\|w\|}
        \le \log C(p) + \log\frac{\|\diff\flow^t(v)\|}{\|v\|} - \nu t
      \end{equation*}
       for all $0\neq w\in E_p$ and $t\ge 0$,  and, thus,
      \begin{equation*}
        \lambda^+(w) \le \lambda^+(v)-\nu.
      \end{equation*}
      Using (i) we obtain
      \begin{equation*}
        -\tilde\lambda^+(u) \le \lambda^+(v)-\nu
      \end{equation*}
      for all $u\in\NS_p\nhim$.

    \item The definition of NAIMs implies that for all $p\in\nhim$,
      $0\neq v\in\T_p\nhim$, $0\neq w\in E_p$, and $t\ge0$ we have
      \begin{equation*}
        \norm*{\diff\flow^{-t}(v)} \norm{w}
        \le C(\flow^{-t}(p)) \e^{-\nu t} \norm{v} \norm*{\diff\flow^{-t}(w)}
        \le C(p) \e^{(\nu_C-\nu)t} \norm{v} \norm*{\diff\flow^{-t}(w)}.
      \end{equation*}
      Because of the gap
      \begin{equation*}
        \lambda^-(v) \le \nu_C -\nu +\lambda^-(w) < \lambda^-(w)
      \end{equation*}
      in the backward Lyapunov exponents we have
      \begin{equation*}
        \lambda^-(u) -\lambda^-(v) \ge \nu-\nu_C
      \end{equation*}
      for all $u\in\NS_p\nhim$.
  \end{enumerate}
\end{proof}

If the invariant manifold has dimension one the necessary conditions become
even more convenient, as the following corollary shows.
\begin{corollary}
  Let $\nhim$ be a one-dimensional nonuniform relative NAIM of some smooth
  vector field $X\in\mathfrak{X}(\ambient)$.
  Then, for each $p\in\nhim$ with $X_p\neq0$ and each basis $u_2,\dots,u_q$ of
  $\mathrm{N}_p\nhim$ we have
  \begin{equation*}
    \lambda^+(X_p) + \min_{\tilde u\in\UN_p\nhim} \tilde\lambda^+(\tilde u^\flat) \ge \nu
  \end{equation*}
  and
  \begin{equation*}
    \min_{u\in\UN_p\nhim} \lambda^-(u) - \lambda^-(X_p) \ge \nu-\nu_C.
  \end{equation*}
\end{corollary}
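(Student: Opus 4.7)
The plan is to read the corollary as a direct specialisation of Theorem~\ref{thm:naim_le}(ii) and (iii) in the one-dimensional case. The only geometric observation needed is that since $\nhim$ is invariant under the flow generated by $X$, the vector field $X$ is tangent to $\nhim$, so $X_p\in\T_p\nhim$ for every $p\in\nhim$. Whenever $X_p\neq 0$ and $\dim\nhim=1$, the single vector $X_p$ therefore spans $\T_p\nhim$ and can play the role of the basis $\{v_1\}$ of $\T_p\nhim$ in the theorem. The normal basis $u_2,\dots,u_q$ of $\NS_p\nhim$ mentioned in the statement is not actually needed for the conclusion but is there so that, together with $X_p$, one has an explicit frame adapted to the splitting $\T_p\ambient=\T_p\nhim\oplus\NS_p\nhim$.

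For the first inequality I would invoke Theorem~\ref{thm:naim_le}(ii) with $v:=X_p$, which yields
\begin{equation*}
  \lambda^+(X_p)+\tilde\lambda^+(u^\flat)\ge\nu
\end{equation*}
for every nonzero $u\in\NS_p\nhim$. Since $\lambda^+(X_p)$ is independent of $u$, taking the infimum over the unit normal sphere $\UN_p\nhim$ on the right-hand side immediately gives
\begin{equation*}
  \lambda^+(X_p)+\min_{\tilde u\in\UN_p\nhim}\tilde\lambda^+(\tilde u^\flat)\ge\nu.
\end{equation*}
The minimum is attained because $\UN_p\nhim$ is compact (it is the unit sphere of the finite-dimensional subspace $\NS_p\nhim$) and $\tilde\lambda^+$ restricted to it is bounded below by $-\lambda^+(X_p)+\nu$.

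The second inequality is handled analogously via Theorem~\ref{thm:naim_le}(iii): setting $v:=X_p$ gives $\lambda^-(u)-\lambda^-(X_p)\ge\nu-\nu_C$ for every $0\neq u\in\NS_p\nhim$, and minimising over $u\in\UN_p\nhim$ yields the stated bound. There is no serious obstacle in this proof: the content of the corollary is entirely inherited from the theorem, and the only step one must articulate is that the one-dimensionality of $\nhim$ together with flow-invariance collapses the choice of a tangent basis to the single canonical direction $X_p$.
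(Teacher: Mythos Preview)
Your proposal is correct and matches the paper's intended argument: the corollary is stated without proof precisely because it is the immediate specialisation of Theorem~\ref{thm:naim_le}(ii) and (iii) obtained by taking $v=X_p$, which is legitimate since flow-invariance of $\nhim$ forces $X_p\in\T_p\nhim$ and one-dimensionality makes $X_p$ span $\T_p\nhim$ whenever $X_p\neq 0$. The only cosmetic remark is that attainment of the minimum over $\UN_p\nhim$ is not really needed---the inequality holds for every $u$, hence for the infimum---and the paper uses ``$\min$'' in the same informal way already in part~(iii) of the theorem.
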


Every Lyapunov exponent $\lambda$ on a $\ambdim$-dimensional space attains at
most $\ambdim$ distinct values for nonzero inputs.
The sublevel sets of these values are linear subspaces of the ambient space
and enable us to assign a multiplicity to each value \cite{Barreira2017}.
Counting them by their multiplicities and sorting them, we get a unique tuple
$(\lambda_1',\dots,\lambda_\ambdim')$ of values satisfying
\begin{equation*}
  -\infty \le \lambda_1' \le \cdots \le \lambda_\ambdim'.
\end{equation*}
For two dual Lyapunov exponents $\lambda, \tilde\lambda$ with respective
corresponding tuples $(\lambda_1',\dots,\lambda_\ambdim')$ and
$(\tilde\lambda_1',\dots,\tilde\lambda_\ambdim')$, the
\emph{Perron regularity coefficient} $\sigma_P$ is defined by
\begin{equation*}
  \sigma_P(\lambda,\tilde\lambda)
  := \max\{\lambda_i' + \tilde\lambda_{\ambdim-i+1}':1 \le i \le \ambdim\}.
\end{equation*}

This coefficient enables us to formulate the following sufficient condition for
generalized normally attracting orbits.
It's a useful tool because it doesn't require knowledge about the vector bundle
of NAIMs.
Instead its existence is derived by its effect on the dual Lyapunov exponent of
normal vectors.
\begin{theorem}\label{thm:sufficient_naim_1d}
  Let $\flow$ be a smooth flow with infinitesimal generator
  $X\in\vfs(\ambient)$ on a Riemannian manifold $\ambient$ and $p\in\ambient$ a
  point with $X_p\neq 0$, $\Def{p}=\R$,
  \begin{equation}\label{eq:backward_ineq}
    \lambda^-(X_p) < \min_{\substack{u\in\UT_p\ambient\\u\perp X_p}} \lambda^-(u),
  \end{equation}
  \begin{equation}\label{eq:forward_ineq1}
    \lambda^+ (X_p) > \ambdim\sigma_P(\lambda^+|_p,\tilde\lambda^+|_p)
    - \min_{\substack{u\in\UT_p\ambient\\u\perp X_p}} \tilde\lambda^+(u^\flat),
  \end{equation}
  and
  \begin{equation}\label{eq:forward_ineq2}
    \lambda^+ (X_p) \ge \max \{\lambda^+(v_2),\dots,\lambda^+(v_\ambdim)\}
  \end{equation}
  for an arbitrary basis $v_1:=X_p,v_2,\dots,v_\ambdim$ of $\T_p\ambient$.

  Then, there is a unique linear subspace $E_p\subset\T_p\ambient$,
  $X_p \notin E_p$  of codimension 1 with
  \begin{enumerate}[(a)]
    \item $\lambda^+(X_p) > \max \lambda^+[E_p]$,
    \item $\lambda^-(X_p) < \min \lambda^-[E_p \setminus \{0\}]$.
  \end{enumerate}
  In addition, the union
  \begin{equation*}
    E := \bigcup_{t\in\R} \bigl(\diff\flow^t [E_p] \times \{\flow^t(p)\}\bigr)
  \end{equation*}
  defines an invariant smooth vector subbundle of $\T\ambient|_{\flow^\R(p)}$,
  and the orbit $\flow^\R(p)$ through $p$ is, thus, a generalized relative
  NAIM.
\end{theorem}
\begin{proof}
  \begin{enumerate}[(a)]
    \item Using the same arguments as in the proof of Lemma 2.4.4 in
      \cite{Barreira2017} we can find dual bases $v_1,\dots,v_\ambdim$ of
      $\T_p\ambient$ and $\omega^1,\dots,\omega^\ambdim$ of $\T^*_p\ambient$,
      normal to $\lambda^+$ and $\tilde\lambda^+$, respectively,
      such that $v_1=X_p$ and
      \begin{equation*}
        \lambda^+(v_1) \ge \cdots \ge \lambda^+(v_m).
      \end{equation*}
      We want to show that $\lambda^+(v_2)<\lambda^+(X_p)$, which
      implies that our candidate
      \begin{equation*}
        E_p := \spn \{v_2,\dots,v_\ambdim\}
      \end{equation*}
      satisfies (a).

      Obviously, we have
      $(\omega^2)^\sharp,\dots,(\omega^\ambdim)^\sharp \perp X_p$
      and therefore
      \begin{equation*}
        \tilde\lambda^+(\omega^i)
        \ge \min_{\substack{u\in\UT_p\ambient\\u\perp X_p}}
        \tilde\lambda^+(u^\flat)
      \end{equation*}
      for all $i=2,\dots,\ambdim$.
      In addition,
      \begin{equation*}
        \lambda^+(v_i) + \tilde\lambda^+(\omega^i)
        \le \sum_{k=1}^\ambdim \lambda^+(v_k) + \tilde\lambda^+(\omega^k)
        = \sum_{k=1}^\ambdim
        \lambda^+_{k,\text{mult.}} + \tilde\lambda^+_{k,\text{mult.}}
        \le \ambdim \sigma_P,
      \end{equation*}
      which implies
      \begin{equation*}
        \lambda^+(v_i) \le \ambdim\sigma_P - \tilde\lambda^+(\omega^i)
        \le \ambdim\sigma_P
        - \min_{\substack{u\in\UT_p\ambient\\u\perp X_p}}
        \tilde\lambda^+(u^\flat).
      \end{equation*}
      for all $i=2,\dots,\ambdim$.
      By assumption, this is strictly less than $\lambda^+(X_p)$.

      Apparently, this strict inequality also implies the uniqueness of linear
      subspace $E_p$ and the linear independence from $X_p$, i.e.,
      $X_p \notin E_p$.

    \item Consider the $E_p$ constructed above and assume there was a vector
      $0\neq v^*\in E_p$ with $\lambda^-(X_p) \ge \lambda^-(v^*)$.
      By construction, $X_p$ and $v^*$ are linearly independent and so we can
      find $\alpha,\beta\in\R, \beta\neq0$ with
      \begin{equation*}
        \alpha X_p + \beta v^* \in \UT_p\ambient \cap X_p.
      \end{equation*}
      The corresponding Lyapunov exponent can be bounded by
      \begin{equation*}
        \lambda^-(\alpha X_p+\beta v^*)
        \le \max\{\lambda^-(v^*),\lambda^-(X_p)\}
        = \lambda^-(X_p),
      \end{equation*}
      which contradicts inequality (\ref{eq:backward_ineq}).

    \item Now, we want to derive the generalized relative normal hyperbolicity
      of $\flow^\R(p)$.

      Denote by $\pi_E:=\pi_{\T\ambient}|_E$ the canonical projection map on
      $E$.
      We need to consider two cases.

      First, assume that the orbit through $p$ is non-periodic.
      This implies that for every $v_q\in E$ there is a unique $t\in\R$ with
      $\flow^t(q)=p$.
      By construction, this also implies $\diff\flow^t(v_q)\in E_p$.
      In addition, having a smooth flow the number $t$ depends smoothly on
      $v_q$.
      Therefore,
      \begin{equation*}
        \pi_E(v_q) = (\flow^{-t}\circ\pi_{\T\ambient}\circ\diff\flow^t)(v_q)
        = \flow^{-t}(p)
      \end{equation*}
      is a smooth projection.
      Because $\diff\flow^t$ is a smooth vector bundle isomorphism, we also
      have $E_q \simeq E_p \simeq \R^{m-1}$.
      Therefore, $E$ defines a smooth vector bundle over $\flow^\R(p)$.

      Next, assume the orbit through $p$ is periodic with period $T$.
      This implies that for every $v_q \in E$ there is some unique $t\in[0,T)$
      such that $\pi_E(v_q) = \flow^t(p) = \flow^{kT+t}(p)$ for all
      $k\in\mathbb{Z}$.
      We want to prove that $\diff\flow^T[E_p] = E_p$.
      Apparently, $\diff\flow^T[E_p]$ forms an $\ambdim-1$-dimensional subspace
      of $\T_p\ambient$ because
      $\diff\flow^T|_{\T_p\ambient}:\T_p\ambient\to\T_p\ambient$ defines an
      isomorphism.
      Furthermore, $\lambda^+[\T_p\ambient\setminus E_p]=\{\lambda^+(X_p)\}$
      and $\lambda^+(\diff\flow^T(v_p)) = \lambda^+(v_p) < \lambda^+(X_p)$ for
      all $v_p\in E_p$.
      As a result, $\diff\flow^T[E_p] \subseteq E_p$ and, since
      $\dim E_p = \dim \diff\flow^T[E_p]$, we have $E_p = \diff\flow^T[E_p]$.
      Similar to the non-periodic case the autonomy of the dynamic system
      implies smoothness of $\pi_E$ and
      $\pi_E^{-1}(q) \simeq E_p \simeq \R^{\ambdim-1}$.
      Therefore, $E$ also defines a smooth vector bundle in this case.
  \end{enumerate}
\end{proof}

\section{Review Trajectory-based optimization approach}
This section briefly recapitulates the core ideas of the trajectory-based
optimization approach by Lebiedz et
al.~\cite{Lebiedz2004,Lebiedz2006a,Lebiedz2010,Lebiedz2011,Lebiedz2011a,Lebiedz2013,Lebiedz2014,Lebiedz2016,Heiter2018,Lebiedz2022}.

When it comes to the determination of NHIMs the challenge is to capture both
the geometric and the dynamic structure.
Even for the case of NAIMs this problem remains.
Methods such as ILDM and FCM solve this by focusing on the local geometric
properties of the vector field.
The TBOA, however, tries to capture both, which ensures invariance and
smoothness of the resulting manifold.
This is achieved by applying the flow map to a set of `comparable' points and
locating extremal points or ridges with respect to some geometric measure.
The union of the corresponding trajectories is the prospect (topological)
manifold.
In this paper we focus on extremal points, which correspond to one-dimensional
manifolds inheriting the smoothness of the flow.
For higher dimensional submanifolds extremal ridges should to be considered.

In an abstract way, applying TBOA means solving an optimization problem of
type:
\begin{equation*}
  \max_{p\in K} F_T(p),
\end{equation*}
where $K\subset \ambient$.
Ideally, the subset $K$ intersects with `most' trajectories and contains points
that are in some sense comparable.
The objective function $F_T$ is parameterized by the finite time horizon $T$.

In the following we discuss reasonable choices for $K$ and $F_T$.

Regarding $K$, previous
articles~\cite{Lebiedz2006a,Lebiedz2011a,Lebiedz2013a,Lebiedz2016} introduced
holonomic constraints of the form
\begin{equation*}
  K = \{p\in \ambient \mid \phi^j(p) = \bar p^j\in\R, j\in I_{\text{fixed}}\}
\end{equation*}
for some fixed (global) chart $\phi:\ambient\to\R^q$, values
$\bar p^j\in\R, j\in I_{\text{fixed}}$, and some index set
$I_{\text{fixed}}\subset\{1,\dots,n\}$, which often corresponds to a reasonable
splitting of the coordinates into slow and fast ones.
In the context of chemical reactions one usually refers to reaction progress
variables (RPVs) and non-reaction progress variables (non-RPVs).

The current article suggests another choice for $K$, namely
\begin{equation}\label{eq:new_constraint}
  K = K_\varepsilon := \{p\in \ambient: \|X_p\|=\varepsilon\}
\end{equation}
for $\varepsilon>0$ sufficiently small.
Obviously, in the case of non-autonomous dynamics one has to fix some
additional point in time for $K_\varepsilon$ to be well-defined.
This article, however, focuses on the autonomous case.

By construction, this choice excludes fixed points and compares points based on
the attained velocity of the vector field.
Our choice~(\ref{eq:new_constraint}) can be considered less arbitrary as we do
not rely on choosing an appropriate coordinate chart and slow-fast splitting;
some positive number $\varepsilon$ is sufficient.
The idea is to detach the model reduction from heuristics of the underlying
scientific discipline and rely solely on the mathematical model.

The objective function $F_T$ was primarily based on heuristic considerations
and inspired by physical concepts such as entropy and
energy~\cite{Lebiedz2004}.
It is further discussed
in~\cite{Reinhardt2008,Lebiedz2010,Lebiedz2011,Lebiedz2016}.

This article suggests one of the following objective functions:
\begin{align*}
  F_T(p) &:= f_T(p),
  &F_T(p) &:= \sup_{t\in (0,T)\cap\Def{p}} f_t(p),\\
  F_T(p) &:= \sup_{t\in (-T,0)\cap\Def{p}} f_t(p),
  &\text{or}~%
  F_T(p) &:= \sup_{t\in (-T,T)\cap\Def{p}} f_t(p),
\end{align*}
where $f_t(p)$ could be one of the following:
\begin{enumerate}[(a)]
  \item $\begin{aligned}[t]
      f_t(p) := \lambda^t(X_p)
    \end{aligned}$,
  \item $\begin{aligned}[t]
      f_t(p) := \lambda^t(X_p)
        +\inf_{\substack{w\in\UT_p \ambient\\w\perp X_p}} \tilde\lambda^t(w^\flat)
    \end{aligned}$,
  \item or
    \begin{equation*}
      f_t(p) :=
      \frac{1}{|t|} \sup_{\tau\in(0,T-t)}
      \left(\tau\nu^T(p)
        -\tau\lambda^{-\tau}(\diff\flow^t(X_p))
        -\inf_{\substack{u\in\UT_p\ambient,\\u\perp X_p}}
        \tau\tilde\lambda^\tau\left(\diff(\flow^{-t})^*_{\flow^t(p)}(u^\flat)\right)
      \right) + \nu^T(p),
    \end{equation*}
    where
    $\begin{aligned}[t]
      \nu^T(p) := \lambda^T(X_p)+\min_{\substack{u\in\UT_p\ambient,\\u\perp X_p}} \tilde\lambda^T(u^\flat)
    \end{aligned}$.
\end{enumerate}
The idea is to deduce some inequality that resembles the ones in
Definition~\ref{def:naim}, Theorem~\ref{thm:naim_le}, and
Theorem~\ref{thm:sufficient_naim_1d}.
Later in this article we further elaborate this issue.

\section{Main result}
\begin{figure}
  \centering
  \includegraphics[width=0.8\textwidth]{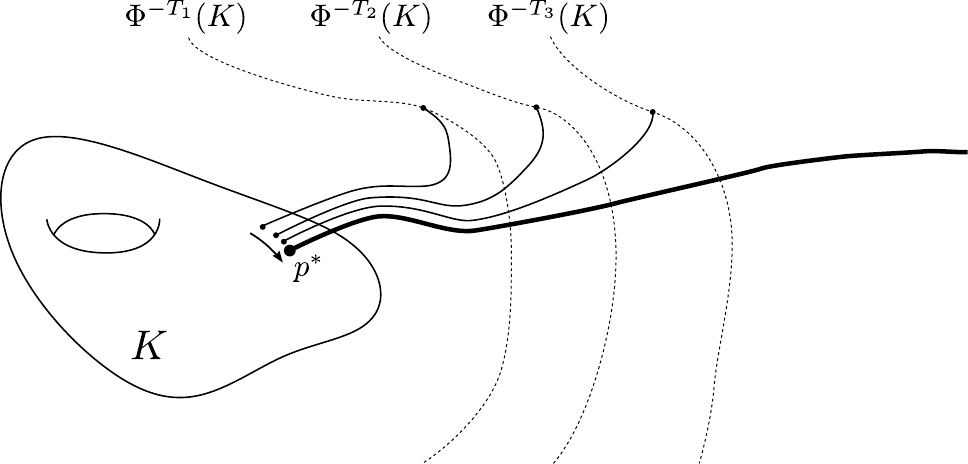}
  \caption{Illustration of the NAIM approximation through TBOA.
    For each time horizon $T_k$ there is a minimizing solution curve with end
    point in $K$.
    The end points of minimizing solution curves then form a net in $K$ over
    time.
    Each normally-attractive orbit intersecting $K$ then corresponds to an
    accumulation point.
  }
\end{figure}
Numerical research provides us with efficient algorithms that approximate
Lyapunov exponents of a specific trajectory
(see~\cite{Benettin1980,Eckmann1985,Sandri1996}).
In our case, however, the goal is to find a trajectory with a minimal Lyapunov
exponent in flow direction.

While the strong algebraic structure of Lyapunov exponents implied some
convenient simplifications regarding the detection of spectral gaps, their
non-continuous nature conflicts with common smooth optimization algorithms.
This section suggests a method to circumvent this inconvenience and find
trajectories with extremal directional Lyapunov exponents.

The following lemma is central to this endeavor.
It states that every accumulation point of a net of minima
corresponding to a family of pointwise monotonously increasing lower
semicontinuous functions on a compactum is a minimum of the pointwise supremum.
This will enable us to calculate smooth finite-time Lyapunov exponents to
approximate the ordinary ones.

\begin{lemma}\label{lemma:top_minima}
  Let $K$ be a compact topological space, $\mathcal{T}\subset\R$ a nonempty
  index set and ${(F_t)}_{t\in \mathcal{T}}$ a family of lower semi-continuous
  functions $F_t:K\to\R\cup\{\infty\}$, pointwise monotonously
  increasing in $t$, i.e.,
  \begin{equation*}
    t_1 \le t_2 \quad\Rightarrow\quad F_{t_1}(p) \le F_{t_2}(p)
    \qquad\text{for all}~p\in K.
  \end{equation*}
  For each $t\in\mathcal{T}$, denote by $\emptyset\neq M_t\subset K$ the set
  of (global) minimum points of $F_t$, and by
  \begin{equation*}
    F(p):=\lim_{t\to\sup\mathcal{T}} F_t(p) = \sup_{t\in\mathcal{T}} F_t(p)
  \end{equation*}
  the lower semi-continuous limit function.
  Each accumulation point of a net ${(x_t)}_{t\in\mathcal{T}}$ with $x_t\in
  M_t$ for all $t$ is a minimum point of $F$.

  In other words, the nonempty outer limit (w.r.t.\ the topology of $K$) of the
  family ${(M_t)}_{t\in\mathcal{T}}$, consists of (global) minima of $F$.
\end{lemma}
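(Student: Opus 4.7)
The plan is to combine the pointwise monotonicity of $F_t$ in $t$ with the lower semi-continuity of each individual $F_s$ and the compactness of $K$. Given an accumulation point $x^* \in K$ of the net $(x_t)_{t \in \mathcal{T}}$, I first pass to a subnet $(x_{t_\alpha})_\alpha$ converging to $x^*$; such a subnet exists by the definition of an accumulation point. I then aim to show $F(x^*) \le F(y)$ for every $y \in K$, which is precisely the condition for $x^*$ to be a global minimum point of $F$.

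The core of the argument is the following chain of inequalities, valid for arbitrary $s \in \mathcal{T}$ and $y \in K$. Since $\mathcal{T} \subset \R$ is directed toward $\sup \mathcal{T}$ by its usual order and the chosen subnet is cofinal in $\mathcal{T}$, eventually $t_\alpha \ge s$; then monotonicity in $t$, the minimality property $x_{t_\alpha} \in M_{t_\alpha}$, and monotonicity once more yield
\begin{equation*}
  F_s(x_{t_\alpha}) \le F_{t_\alpha}(x_{t_\alpha}) \le F_{t_\alpha}(y) \le F(y).
\end{equation*}
Lower semi-continuity of $F_s$ at $x^*$ then produces $F_s(x^*) \le \liminf_\alpha F_s(x_{t_\alpha}) \le F(y)$. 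Taking the supremum over $s \in \mathcal{T}$ on the left and invoking the identity $F = \sup_{s \in \mathcal{T}} F_s$ gives $F(x^*) \le F(y)$; since $y \in K$ was arbitrary, $x^*$ minimizes $F$.

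Nonemptiness of the outer limit then follows from compactness of $K$, which guarantees that every net in $K$ admits a convergent subnet and hence an accumulation point. I do not anticipate a serious obstacle: the only real subtlety is the bookkeeping of the net indexing, namely verifying that an arbitrary $s \in \mathcal{T}$ is eventually dominated by the subnet indices, which is precisely the cofinality of a subnet. In the degenerate case where $\sup \mathcal{T} \in \mathcal{T}$ the claim collapses to the observation that $M_{\sup \mathcal{T}}$ consists of global minima of $F = F_{\sup \mathcal{T}}$, so the substantive content of the lemma lies in the passage to the limit when $\sup \mathcal{T}$ is not attained, where the interplay of monotonicity and lower semi-continuity above does the required work.
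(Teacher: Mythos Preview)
Your proof is correct and follows essentially the same strategy as the paper: fix a level $s\in\mathcal{T}$, use monotonicity and the minimality of $x_{t_\alpha}$ to trap $F_s(x_{t_\alpha})$ below $F(y)$, and then invoke lower semi-continuity of $F_s$ to pass to the limit point before taking the supremum over $s$. The only cosmetic difference is that the paper unwraps the argument in terms of neighborhoods and an auxiliary real number $h<F(p_*)$ (the standard $\varepsilon$-style formulation of lower semi-continuity), whereas you phrase it via a convergent subnet and $\liminf$; these are two equivalent packagings of the same idea.
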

\begin{proof}
  It is well-known that the pointwise supremum of any family of lower
  semi-continuous functions is itself lower semi-continuous.
  In addition, each lower semi-continuous function on a compact space attains
  its infimum, which might in general also include $\pm\infty$.

  Now, consider an outer accumulation point $p_*$ of
  ${(M_t)}_{t\in\mathcal{T}}$, that is, for all $s\in \mathcal{T}$ and all
  neighborhoods $U$ of $p_*$ there is a $t\in\mathcal{T}, s \le t$ such that
  $M_t \cap U \neq \emptyset$.

  Choose an arbitrary $\R \ni h < F(p_*)$ (which exists because
  $-\infty<F(p_*)$).
  By definition of $F$ there is an $s\in\mathcal{T}$ such that $h<F_s(p_*)$.
  The lower semi-continuity of $F_s$ in turn implies that there exists a
  neighborhood $V$ of $p_*$ such that $h<F_s(p)$ for all $p\in V$.

  Next, the accumulation property of $p_*$ provides a
  $T\in\mathcal{T}, s\le T$, such that $M_T \cap V \neq \emptyset$, i.e.,
  there is a minimal point $x_T$ of $F_T$ in $V$.
  We obtain the following order:
  \begin{equation*}
    h < F_s(x_T) \le F_T(x_T) \le F_T(p) \le F_t(p) \le F(p)
  \end{equation*}
  for all $p\in K$ and all $t\ge T$.
  Finally, this implies $F(p_*) \le F(p)$ for all $p\in K$.
\end{proof}

Obviously, this result also holds for the maxima of the pointwise infimum of
upper semi-continuous functions if we change signs.

We obtain the following immediate consequence for our specific setting.
\begin{theorem}
  Consider a compact subset $K$ of a Riemannian manifold $\ambient$, a smooth
  injective semiflow $\flow:\R_{\ge0}\times \ambient \to \ambient$, and a family
  ${(f_t)}_{t\in\R}$ of lower semi-continuous functions
  $f_t:\ambient_t:=\{p\in \ambient:t\in\Def{p} \} \to [0,\infty]$.
  Then, by
  \begin{gather*}
    F_T^+ (p) := \sup_{t\in [0,T]\cap\Def{p}} f_t(p),\\
    F_T^- (p) := \sup_{t\in [-T,0]\cap\Def{p}} f_t(p),\\
    F_T^\pm (p) := \sup_{t\in [-T,T]\cap\Def{p}} f_t(p),
  \end{gather*}
  respectively, we define families $(F_T)_T$ of lower semi-continuous
  functions, each defined for all $p\in \ambient$ and pointwise monotonously
  increasing in $T$.
  Then, for all $p\in K$ the functions
  $F^+(p) := \sup\limits_{T\ge0} F_T^+(p)$,
  $F^-(p) := \sup\limits_{T\ge0} F_T^-(p)$,
  and $F^\pm(p) := \sup\limits_{T\ge0} F_T^\pm(p)$,
  respectively, have each at least one minimum point $p_*\in K$ that is also
  an accumulation point of the corresponding net of minimum points.
\end{theorem}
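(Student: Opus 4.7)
The plan is to reduce the theorem to a direct application of Lemma~\ref{lemma:top_minima} with $\mathcal{T}=\R_{\ge 0}$, separately for each of the three families $(F_T^+)_T$, $(F_T^-)_T$, and $(F_T^\pm)_T$. For each family I would verify the three hypotheses of the lemma: pointwise monotonicity in $T$, lower semi-continuity of each $F_T$ on $K$, and nonemptiness of the minimum set $M_T\subset K$.

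Monotonicity is immediate, because enlarging $T$ only enlarges the index sets $[0,T]\cap\Def{p}$, $[-T,0]\cap\Def{p}$, and $[-T,T]\cap\Def{p}$ over which the supremum defining $F_T$ is taken. Once lower semi-continuity is established, nonemptiness of $M_T$ follows immediately from compactness of $K$: a lower semi-continuous function on a compactum attains its infimum.

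The delicate step is lower semi-continuity. For the forward case $F_T^+$ this is essentially automatic: since the semiflow is defined for all nonnegative times we have $[0,T]\subset\Def{p}$ for every $p$, so $F_T^+(p)=\sup_{t\in[0,T]} f_t(p)$ is a pointwise supremum of lower semi-continuous functions over a $p$-independent index set, hence lower semi-continuous. For $F_T^-$ and $F_T^\pm$ the index set $[-T,0]\cap\Def{p}$ genuinely depends on $p$, so more care is needed. I would extend each $f_t$ to $\ambient$ by setting $g_t(p):=f_t(p)$ if $t\in\Def{p}$ and $g_t(p):=0$ otherwise; since $f_t\ge 0$ this does not affect the supremum. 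It then suffices to show that each $g_t$ is lower semi-continuous on $\ambient$, which reduces to showing that $\ambient_t:=\{p\in\ambient : t\in\Def{p}\}$ is open. For $t\ge 0$ this is trivial; for $t<0$ the set $\ambient_t$ coincides with the image $\flow^{|t|}(\ambient)$, which is open by invariance of domain applied to the continuous injective map $\flow^{|t|}$ between manifolds of equal dimension $q$.

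With monotonicity and lower semi-continuity in hand, the conclusion is immediate: compactness of $K$ ensures that the net $(x_T)_T$ of minimizers has at least one accumulation point $p_*\in K$, and Lemma~\ref{lemma:top_minima} then guarantees that any such $p_*$ is a global minimum of $F=\sup_T F_T$. The main conceptual obstacle is the lower semi-continuity argument in the backward and two-sided cases, where the $p$-dependence of $\Def{p}$ has to be reconciled with the standard supremum-of-lsc principle via invariance of domain for the injective semiflow.
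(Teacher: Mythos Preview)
Your proposal is correct and follows exactly the paper's approach: the paper's proof reads in full ``This is a direct consequence of the preceding lemma.'' You simply fill in the verification of the lemma's hypotheses---monotonicity (trivial) and lower semi-continuity (via the extension-by-zero and invariance-of-domain argument for the backward/two-sided cases)---that the paper leaves implicit.
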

For convenience we drop the $\pm$, $-$, and $+$ in $F_T^\pm$, $F_T^-$ and
$F_T^+$ whenever it is not important to distinguish.
\begin{proof}
  This is a direct consequence of the preceding lemma.
\end{proof}

Most important for our purposes are the following applications.
These enable us to determine normally attracting trajectories by solving
finite-dimensional optimization problems and checking if the attained objective
value is negative.
This is a far-reaching generalization of the model-specific results by Lebiedz,
Siehr, and Unger~\cite{Lebiedz2011a}.
\begin{examples}\label{ex:gs}
  Let $X_p := \deriv{}{t}\flow^t(p)|_{t=0}$ be the
  infinitesimal generator of $\flow$.
  \begin{enumerate}[(a)]
    \item\label{it:gs1} For
      $f_t(p):=\lambda^t(X_p)$
      and the corresponding $F^+(p_*)<\infty$ we obtain the inequality
      \begin{equation*}
        \|\diff\flow^t(X_{p_*})\| \le \e^{tF^+(p_*)} \|X_{p_*}\|
      \end{equation*}
      for every $t\ge0$.
    \item For $f_t(p):=\lambda^t(X_p)$ and the corresponding
      $F^\pm(p_*)<\infty$ we obtain the inequality
      \begin{equation*}
        \|\diff\flow^t(X_{p_*})\|
        \le \e^{|t|F^\pm(p_*)} \|X_{p_*}\|
      \end{equation*}
      for every $t\in\R$.
    \item For $f_t(p) := \lambda^t(X_p)
      -\inf\limits_{\substack{v\in\UT_p \ambient\\v\perp X_p}} \tilde\lambda^t(v^\flat)$
      and the corresponding $F^+(p_*)<\infty$ we obtain the inequality
      \begin{equation*}
        \|v\| \left\|\diff\flow^t(X_{p_*})\right\|
        \le \e^{tF^+(p_*)} \|X_{p_*}\| \left\|\diff\flow^t(v)\right\|
      \end{equation*}
      for every $t\ge 0$ and $v\perp X_{p_*}$, which resembles the defining
      inequality of NAIMs.
  \end{enumerate}
\end{examples}

\section{Numerical aspects}\label{sec:numerics}
Using the results of Example 5 we can formulate an algorithm to approximate
NAIMs of dimension one.\vspace{1em}

Given: $(\ambient,g)$ Riemannian manifold, $X: \ambient \to \T\ambient$ vector
field on $\ambient$, and a number $N\gg 0$.
\begin{enumerate}
  \item Choose a nonempty, compact subset \(K\) that contains no fixed points
    but intersects as many trajectories as possible, e.g., an appropriate
    level set of the vector field length.
  \item Use some smooth numerical optimization algorithm, e.g., the inner
    point method, to approximate a minimum point $p^*\in K$ of the
    corresponding $\mu_N|_K$.
    This also involves using a numerical ODE solver to approximate the
    generated flow.
  \item Approximate the solution curve $\gamma$ through $p^*$ on some
    appropriate time interval $J\subset\R$ using some numerical ODE solver.
  \item Return the approximation of $\gamma$.
\end{enumerate}

For numerical validation we applied our method to three test models.
The first model we tested is the two-dimensional \emph{Davis-Skodje model}.
This model is given by the system
\begin{align*}
  \dot x =& -x\\
  \dot y =& -\gamma y+\frac{(\gamma-1)x+\gamma x^2}{{(1+x)}^2},
\end{align*}
where $\gamma \gg 1$ and $x>-1$.
The corresponding NAIM is given by
\begin{equation*}
  S_\text{DS} = \left\{(x,y)\in\R^2\mid x>-1, y=\frac{x}{1+x}\right\}.
\end{equation*}
Figure~\ref{fig:ds_results} shows how an increasing time horizon of the TBOA
leads to better approximations.
\begin{figure}
  \centering%
  \includegraphics[width=\textwidth]{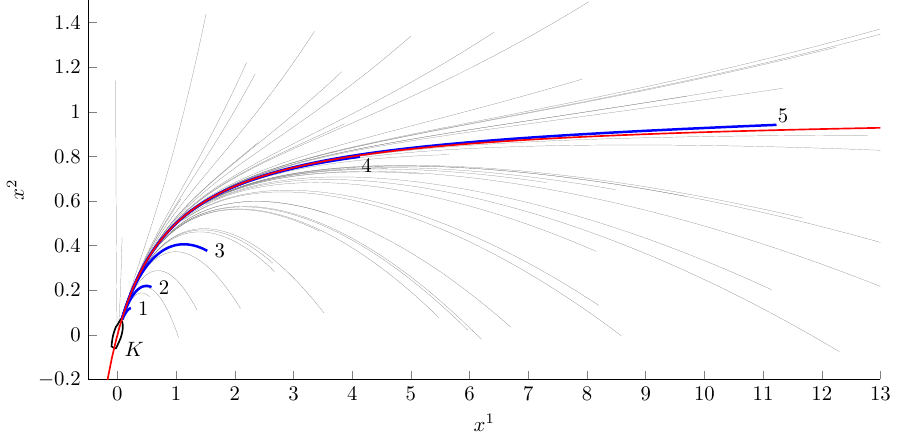}%
  \caption{Results for the Davis-Skodje model with $\gamma=2$ and increasing
    time horizon $T=1,\dots,5$.
    The optimal trajectories (blue) approximate the actual NAIM (red) with
    increasing time horizon.
    Random trajectories (grey) illustrate the bundling behavior of the NAIM.
    The level set $K$ (black) surrounds the equilibrium point at the origin.
    The initial value problems for each step were solved using
    \textsc{Matlab}'s \texttt{ode45} solver.%
  }%
  \label{fig:ds_results}
\end{figure}

Next we tested the \emph{Michaelis-Menten model}, which models an enzymatic
reaction and is given by
\begin{align*}
  \dot x =& -x + xy + (\kappa - \beta) y\\
  \dot y =& \gamma (x + xy + \kappa y).
\end{align*}
In this case we also observe an NAIM similar to the Davis-Skodje model.
Again, the TBOA yields approximations of increasing precision, as presented in Figure~\ref{fig:mm_results}.
\begin{figure}
  \centering
  \includegraphics[width=\textwidth]{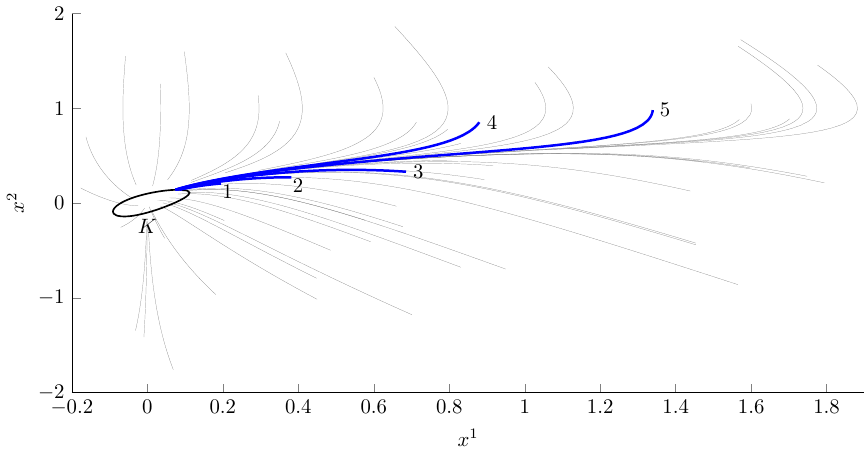}
  \caption{Results for the Michaelis-Menten model with $\gamma=\kappa=\beta$
    and increasing time horizon $T=1,\dots,5$.
    For an increasing time horizon the optimal trajectories (blue) approximate
    the actual NAIM, which is indicated by the bundling of random trajectories
    (grey).
    The level set $K$ (black) surrounds the equilibrium point at the origin.
    The initial value problems for each step were solved using
    \textsc{Matlab}'s \texttt{ode45} solver.%
  }%
  \label{fig:mm_results}
\end{figure}

\subsection*{Hydrogen combustion model}
We also tested our method for a realistic combustion process of physical
chemistry.
The underlying reaction mechanism is given in Table~\ref{tab:icepic_mech}.
This mechanism is a simplified version of the one by Li et al.~\cite{Li2004},
used by Ren et al.~\cite{Ren2006a} and Lebiedz and Siehr~\cite{Lebiedz2013}
to test the ICE-PIC method and a previous version of the TBOA method,
respectively.
\begin{table}%
  \centering
  \begin{tabular}{lllrrr}
    \toprule
  Reaction &&& $A$ / $\unit{mol} \unit{cm}^{-1} \unit{s}^{-1} \unit{K}^{-1}$
  & $b$ & $E_a$  / $\unit{kJ} \unit{mol}^{-1}$ \\
    \midrule
    $\ce{O + H2}    $&$\ce{<=>}$&$\ce{H + OH}$ &$5.08\times10^{04}$& 2.7&26.317 \\
    $\ce{H2 + OH}   $&$\ce{<=>}$&$\ce{H2O + H}$&$2.16\times10^{08}$& 1.5&14.351 \\
    $\ce{O + H2O}   $&$\ce{<=>}$&$\ce{2OH}$    &$2.97\times10^{06}$& 2.0&56.066 \\
    $\ce{H2 + M}    $&$\ce{<=>}$&$\ce{2H + M}$ &$4.58\times10^{19}$&-1.4&436.726\\
    $\ce{O + H + M} $&$\ce{<=>}$&$\ce{OH + M}$ &$4.71\times10^{18}$&-1.0&0.000  \\
    $\ce{H + OH + M}$&$\ce{<=>}$&$\ce{H2O + M}$&$3.80\times10^{22}$&-2.0&0.000  \\
    \bottomrule
  \end{tabular}
  \caption{Simplified hydrogen combustion mechanism as used in~\cite{Ren2006a}.}%
  \label{tab:icepic_mech}
\end{table}

Under isothermal conditions we observe the results presented in
Figure~\ref{fig:icepic_isotherm_results}.

While our test results for this mechanism confirm that the theoretical
findings can also work for realistic kinetic mechanisms there is room for
improvement in terms of robustness and efficiency.

\begin{figure}
  \centering
  \includegraphics[width=\textwidth]{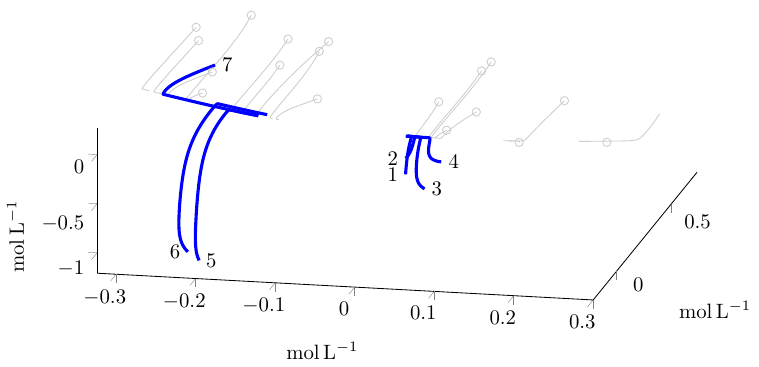}
  \caption{%
    TBOA applied to the isothermal hydrogen combustion mechanism at 3000\,K.
    The observed NAIM is increasingly well approximated for increasing time
    horizons $T=10^{-14}$ to $10^{-13}$.
    In this plot the trajectories are isometrically embedded into the three
    dimensional invariant affine subspace given by the atomic mass constraints.
    While each axis is of dimension $\unit{mol}\unit{L}^{-1}$ the chosen
    coordinates are arbitrary and have no particular physical meaning.
    The initial value problems for each step were solved using
    \textsc{Matlab}'s \texttt{ode15s} solver.%
  }%
  \label{fig:icepic_isotherm_results}
\end{figure}

\section{NAIMs of higher dimensions}\label{sec:higher_dim}
Usually, we are interested in higher-dimensional NAIMs.
To account for those, the TBOA has to be further adapted and extended.
In this section we outline how the same theoretic results can be applied in
this more general case.
The core idea is to replace the finite dimensional optimization of one point
on $K$ with a variational problem that evaluates some objective functional
over some family $\mathcal{K}$ of suitable submanifolds of $K$.
This introduces a fair amount of technicalities that need further elaboration.
In this section we sketch one possible generalization that might work in
some cases.

\begin{figure}
  \centering
  \includegraphics[width=.8\textwidth]{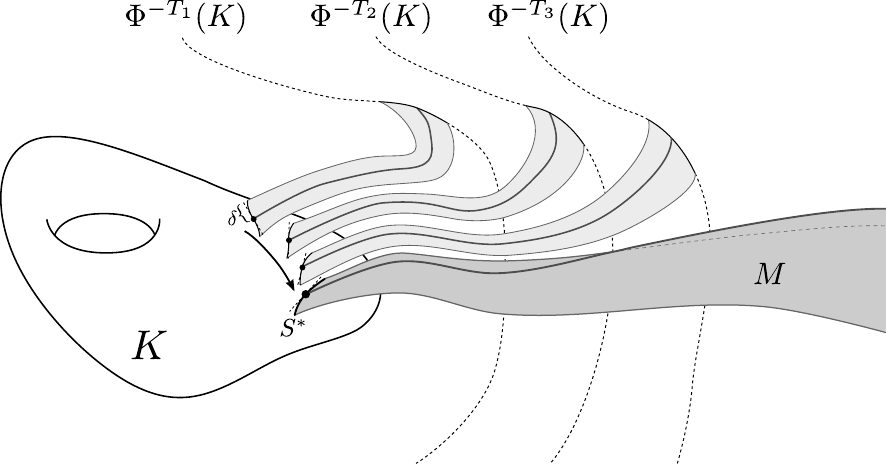}
  \caption{Illustration of the $m$-dimensional NAIM approximation.
    Assume $K$ is compact.
    Each element $D_p$ in the Grassmann bundle $G_{m-1}(\T K)$ corresponds to an
    $m-1$-dimensional totally geodesic submanifold $S$, which is obtained by
    applying the exponential function $\exp|_{\T K}$ to the disk
    $D_p\cap B_\delta$, where $0<\delta\le r_\text{inj}(K)$ is fixed.
    For each time horizon $T_k\ge0$ there is a minimizing element in the
    (compact) Grassmann bundle.
    The totally geodesic submanifolds of the accumulation points in the
    Grassmann bundle are the NAIM candidates.
    In the drawing above, $S^*$ represents one of these candidates, which is
    approached in the Grassmann topology by a net of minimizing submanifolds.
  }%
  \label{fig:naim_approximation_mdim}
\end{figure}
First, we want to discuss possible choices for $\mathcal{K}$.
Due to its vast size the set of all $m-1$-dimensional submanifolds of $K$ is
not suitable as it not only prevents basically all numerical efforts but also
impedes the general solution theory.

Instead, for Riemannian manifolds $K$ with positive injectivity radius
$r_\text{inj}(K)>0$ we suggest using the following subset of totally geodesic
submanifolds of $K$:
\begin{equation*}
  \mathcal{K}_\delta := \big\{\exp[B_\delta(0)\cap D_p] \mid
  D_p \subseteq\T_p K~\text{linear subspace}, \dim(D_p)=m-1, p\in K \big\},
  \qquad 0<\delta\le r,
\end{equation*}
where $\exp: \{v\in\T K:\|v\|<r\} \to K$ denotes the exponential map of the
Riemannian manifold $K$ and $B_\delta(0)$ is the open ball with center $0$
and radius $\delta$.
The construction is illustrated in Figure~\ref{fig:naim_approximation_mdim}.
Compact manifolds are an example for Riemannian manifolds with positive
injectivity radius.
By construction, the elements of $\mathcal{K}_\delta$ form (immersed)
Riemannian submanifolds.

The main advantage of this construction is the finite-dimensional nature of
$\mathcal{K}_\delta$ and the provided construction of each element using
geodesics.
Apparently, $\mathcal{K}_\delta$ is diffeomorphic to the Grassmann bundle
$G_{m-1}(\T K)$ and therefore the variational problem in this case is still
equivalent to a finite dimensional optimization problem on a manifold.
This is a major simplification compared to minimal submanifolds.
Furthermore, this manifold is compact whenever $K$ is, because it is a fiber
bundle over a compact Hausdorff spaces with compact Hausdorff
fibers (cf.~\cite{Steenrod1951}).

When it comes to the choice of the objective functional
Theorem~\ref{thm:naim_le} and Theorem~\ref{thm:sufficient_naim_1d} again
provide us with an idea what to choose.
As for the one-dimensional case each time $T$ provides us with a set
$\Sigma_T$ of submanifolds in $\mathcal{K}_\delta$ minimizing $F_T$.
The relatively simple topology of $\mathcal{K}_\delta$ including compactness
(for compact $K$) enables us to apply Lemma~\ref{lemma:top_minima} again,
which results in at least one $S_*\in\mathcal{K}_\delta$ minimizing
$\lim_{T\to\infty} F_T$.

However, in contrast to the one-dimensional case we have to pay additional
attention to the perpetuation of dimensionality.
In the one-dimensional case this was ensured by simply requiring the
vector field not to vanish on $K$.
For the higher-dimensional case more diligence is necessary.
In particular, the vector field must nowhere be tangent to the minimal
submanifold $S_*$ in $\mathcal{K}_\delta$.
The easiest solution is requiring the vector field to be nowhere tangent to
$K$.

Using one of the $S_*$ we can construct a surface $\nhim$ by considering the
closure with respect to the dynamical system.
It is not clear to us, however, if the strong properties of $S_*$
suffice to assume that $\nhim$ is an (immersed) submanifold.
This is obviously not true for flowouts in general.

However, as the following theorems suggests, self-intersections such as in
Figure~\ref{fig:self_intersection} must be tangent, meaning that the tangent
spaces of all local neighborhoods must align at the intersection points.
The first is a generalization of Theorem \ref{thm:sufficient_naim_1d}.

\begin{theorem}
  Let $\flow$ be a smooth flow with infinitesimal generator
  $X\in\vfs(\ambient)$ on a Riemannian manifold $\ambient$ and
  $S\subset\ambient$ an embedded smooth submanifold where $X$ is nowhere
  tangent and for all $p\in S$ we have $\Def{p}=\R$,
  \begin{equation*}
    \max_{v\in X_p \oplus \T_p S} \lambda^-(v)
    < \min_{\substack{u\in(X_p\oplus\T_p S)^\perp\\u\neq 0}} \lambda^-(u),
  \end{equation*}
  \begin{equation*}
    \min_{\substack{v\in X_p \oplus \T_p S\\v\neq 0}} \lambda^+ (X_p)
    > \ambdim\sigma_P(\lambda^+|_p,\tilde\lambda^+|_p)
    - \min_{\substack{u\in(X_p\oplus\T_p S)^\perp\\u\neq 0}} \tilde\lambda^+(u^\flat),
  \end{equation*}
  and
  \begin{equation*}
    \min\{\lambda^+(X_p),\lambda^+(v_2),\dots,\lambda^+(v_\nhimdim)\}
    \ge \max \{\lambda^+(v_{\nhimdim+1}),\dots,\lambda^+(v_{\ambdim})\}
  \end{equation*}
  for an arbitrary basis $X_p,v_2,\dots,v_\ambdim$ of $\T_p\ambient$ with
  $v_2,\dots,v_\nhimdim\in\T_p S$ and
  $v_{\nhimdim+1},\dots,v_\ambdim\in(X_p\oplus\T_p S)^\perp$.

  Then, there is a unique continuous vector subbundle $E_S$ of $\T\ambient|_S$
  with rank $\dim{\ambdim}-\dim(S)-1$ and
  \begin{enumerate}[(a)]
    \item $\begin{displaystyle}
        \min_{\substack{v\in X_p\oplus\T_p S\\v\neq 0}} \lambda^+(v)
        > \max_{u\in E_p} \lambda^+(u)
      \end{displaystyle}$ for all $p\in S$,

    \item $\begin{displaystyle}
        \max_{v\in X_p\oplus\T_p S}\lambda^-(v)
        < \min_{\substack{u\in E_p\\u\neq 0}} \lambda^-(u)
      \end{displaystyle}$ for all $p\in S$,
  \end{enumerate}
\end{theorem}
\begin{proof}
  The proof is similar to the one of Theorem~\ref{thm:sufficient_naim_1d}.
  First, consider an arbitrary point $p\in S$ and prove the inequalities
  pointwise.
  \begin{enumerate}[(a)]
    \item We choose dual bases $v_1,v_2,\dots,v_\ambdim$ of $\T_p\ambient$ and
      $\omega^1,\dots,\omega^\ambdim$ of $\T^*_p\ambient$, normal to
      $\lambda^+$ and $\tilde\lambda^+$, respectively, such that
      $v_{\ambdim-\nhimdim},\dots,v_\nhimdim\in X_p\oplus\T_pS$, $v_j = X_p$
      for some $j\in\{1,\dots,\nhimdim\}$, and
      \begin{equation*}
        \lambda^+(v_1) \ge \cdots \ge \lambda^+(v_\ambdim).
      \end{equation*}
      Our candidate for the linear subspace is
      \begin{equation*}
        E_p := \spn \{v_{\nhimdim+1},\dots,v_\ambdim\}.
      \end{equation*}
      Because
      $(\omega^{\nhimdim+1})^\sharp,\dots,(\omega^\ambdim)^\sharp \in(X_p\oplus\T_p S)^\perp$
      we have
      \begin{equation*}
        \tilde\lambda^+(\omega^i) \ge \min_{u\in(\substack{X_p\oplus\T_p S})^\perp}
      \end{equation*}
      for all $i=\nhimdim+1,\dots,\ambdim$.
      Using the same arguments as in the proof of
      Theorem~\ref{thm:sufficient_naim_1d} this implies (a).

    \item This works analogous to Theorem~\ref{thm:sufficient_naim_1d} (b).
  \end{enumerate}
  When we sew together the $E_p$ for all $p\in S$ we obtain the (continuous)
  vector bundle
  \begin{equation*}
    E_S := \bigsqcup_{p\in S} E_p
  \end{equation*}
  with the canonical projection map $\pi_{E_S} := \pi_{\T\ambient}|_S$.
\end{proof}

\begin{theorem}
  Let $\flow$ be a smooth flow with infinitesimal generator
  $X\in\vfs(\ambient)$ on a Riemannian manifold $\ambient$,
  $S\subset\ambient$ an embedded smooth submanifold where $X$ is nowhere
  tangent, for all $p\in S$ we have $\Def{p}=\R$, and $E_S$ a smooth vector
  subbundle of $\T\ambient|_S$ with rank $\dim{\ambdim}-\dim(S)-1$ and
  \begin{enumerate}[(a)]
    \item $\begin{displaystyle}
        \min_{\substack{v\in X_p\oplus\T_p S\\v\neq 0}} \lambda^+(v)
        > \max_{u\in E_p} \lambda^+(u)
      \end{displaystyle}$ for all $p\in S$ and

    \item $\begin{displaystyle}
        \max_{v\in X_p\oplus\T_p S}\lambda^-(v)
        < \min_{\substack{u\in E_p\\u\neq 0}} \lambda^-(u)
      \end{displaystyle}$ for all $p\in S$.
  \end{enumerate}

  Then, there are two unique continuous and invariant vector subbundles $E,F$
  of $\T\ambient|_\nhim$ over the image $\nhim$ of the smooth immersion
  $\flow|_{\R\times S}$ with $E|_S = E_S$,
  $F|_S = \T S \oplus \bigsqcup_{p\in S} \spn X_p$, and
  $E\oplus F=\T\ambient|_\nhim$.
\end{theorem}
\begin{proof}
  The two vector bundles are given by
  \begin{equation*}
    E := \bigcup_{t\in\R,p\in S} 
    \left(\diff\flow^t[E_p] \times \{\flow^t(p)\}\right)
  \end{equation*}
  and
  \begin{equation*}
    F := \bigcup_{t\in\R,p\in S} 
    \left(\diff\flow^t[X_p\oplus\T_p S] \times \{\flow^t(p)\}\right),
  \end{equation*}
  respectively.
  The proof is analogous to Theorem~\ref{thm:sufficient_naim_1d}.
  It relies on the fact that for autonomous systems two orbits are identical if
  and only if they share a common point.
  This excludes the possibility that two distinct orbits intersect.
  Self-intersections of $N$ must happen along orbits, which implies that the
  subspaces $E_p$ and $F_p$ of the different folds align at each
  intersection point $p$.
\end{proof}
In general, the image $N$ is not a smooth submanifold.
Therefore, the vector bundles $E$ and $F$ need not be smooth.
Tangential self-intersections are possible.

\begin{figure}
  \centering%
  \includegraphics[width=.6\textwidth]{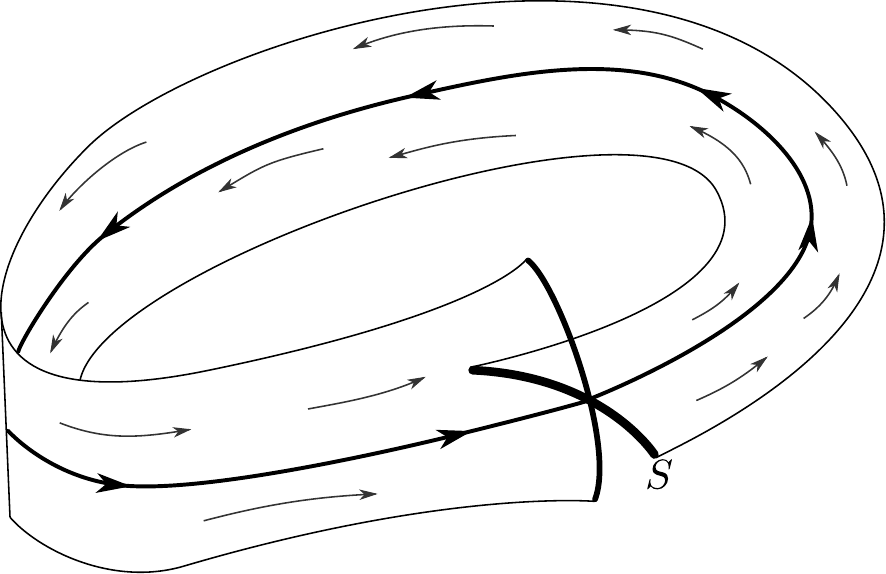}%
  \caption{%
    The smooth flowout of a single point is guaranteed to be an immersed
    submanifold.%
    For higher dimensional embedded submanifolds the situation is captured by
    the Flowout Theorem (see e.g.~\cite{Lee2013}).
    If the vector field is nowhere tangent to an embedded submanifold $S$, then
    $\flow|_{(\R\times S)\cap\mathcal{D}}$ is a smooth immersion, though, not
    necessarily injective.
    This is best illustrated by the flow map
    $\flow^t(\alpha,z)=(\alpha+2\pi t,4t\im z)$ on the vector bundle
    $\ambient:=\sphere^1\times\R^2$ over the unit circle $\sphere^1$, where we
    use the canonical identification $\R^2\cong\CC$ for convenience.
    In our applications this limitation can often be neglected if we assume
    that $S$ is in the basin of attraction of some attractor $\Omega$ and
    $S\cap\Omega=\emptyset$.
    Then, the flowout of $S$ is indeed a immersed manifold because it does not
    contain any periodic orbit.%
  }%
  \label{fig:self_intersection}
\end{figure}

The last consideration we want to mention with respect to the
higher-dimensional case is the choice of geometry on $K$.
Implicitly we used the induced metric, which is up for debate, especially when
considering that the family of totally geodesic submanifolds is small and the
NAIM might be curved with respect to the induced metric as illustrated in
Figure~\ref{fig:curved_naim}.
\begin{figure}
  \centering
  \includegraphics[width=0.6\textwidth]{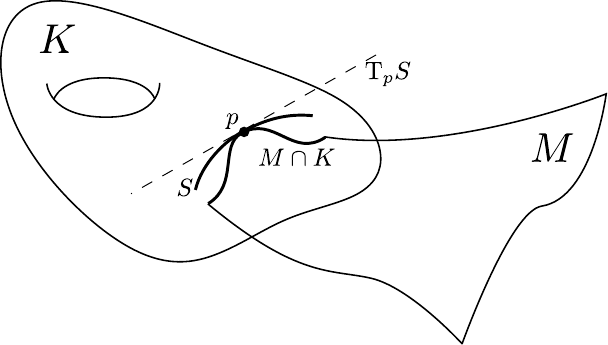}
  \caption{The induced Riemannian metric on $K$ might not be the right choice
    in order to find an NAIM.
    In this figure $\nhim$ denotes an NAIM, which transversally intersects a
    compact embedded submanifold $K$.
    We denote by $S=\{\exp[B_\delta(0)\cap\T_p S]\}\in\mathcal{K}_\delta$
    the unique totally geodesic submanifold of radius $\delta$ with respect to
    the induced Riemannian metric for some $p\in\nhim\cap K$ and assume
    $\T_p S=\T_p(\nhim\cap K)$.%
  }
  \label{fig:curved_naim}
\end{figure}
For this reason one could use a pullback metric like $g^T:=(\flow^T){}^*g$.
However, in order to apply Lemma~\ref{lemma:top_minima} this metric either
has to be fixed a priori or another supremum over a complete family of
metrics has to be introduced in the formula of the objective functional
$F_T$.
Otherwise the pointwise monotonicity of $F_T$ in $T$ is not guaranteed.
Both considerations go beyond the scope of the current article.

\section{Summary \& Conclusion}
The aim of this work is to study the effectiveness of the trajectory-based
optimization approach by Lebiedz~\cite{Lebiedz2004}.
So far this approach was based on physical intuition and geometric heuristics
in order to approximate invariant manifolds of slow motion.
In this article we show how it can be used to approximate normally attracting
orbits.
The method is formulated in a coordinate-free way on abstract Riemannian
manifolds.

In Section~\ref{sec:numerics} we test the theoretic results for two simple
nonlinear models with globally attracting equilibrium points and known normally
attracting orbits, which are frequently used for benchmarks in the field of
manifold-based model reduction in reactive flows.
It was also tested on a realistic isotherm hydrogen combustion mechanism used
by Ren et al.~\cite{Ren2006a} and Lebiedz and Siehr~\cite{Lebiedz2013a}.
The results confirm the theoretical findings.

In Section~\ref{sec:higher_dim} we outline how to apply the method to
submanifolds of higher dimension.
The idea is to consider the exponential map on the Grassmann bundle of an
appropriate submanifold.
There are, however, some open questions in this regard, e.g., which Riemannian
metric should be used on the submanifold.
In addition, further research is necessary to check if this higher-dimensional
algorithm can be implemented efficiently.

\subsection*{Acknowledgements}
The authors acknowledge the financial support from the Klaus Tschira Stiftung
in the project 00.003.2019.
Further, the authors thank Marius Müller and Johannes Poppe for discussions on
the topic.

\printbibliography

\end{document}